\numberwithin{equation}{section}
\newtheorem{thm}{Theorem}[section]
\newtheorem{lem}{Lemma}[section]
\newtheorem{rem}{Remark}[section]
\newtheorem*{rem*}{Remark}
\newtheorem{prop}{Proposition}[section]
\title 
{Negative time splitting is stable}
\date{}
\author
{Dong Li
\thanks
{Department of Mathematics, the Hong Kong University of Science
\& Technology, Clear Water Bay, Hong Kong.
 Email: {mpdongli@gmail.com}.
 }\qquad
{Chaoyu Quan}	
\thanks{SUSTech International Center for Mathematics, Southern University of Science and Technology,	Shenzhen, China.
Email: {quancy@sustech.edu.cn}.
}
}
\begin{document}
\maketitle
\begin{abstract}
For high order (than two) in time operator-splitting methods applied to dissipative systems, a folklore
issue is the appearance of negative-time/backward-in-time linear evolution operators such as backward heat operators  interwoven with nonlinear evolutions.  The  stability of such methods has remained an
ensuing difficult open problem. In this work we consider a  fourth order operator
splitting discretization for the Allen-Cahn equation which is a prototypical high order
splitting method with negative time-stepping, i.e. backward in time integration for the linear
parabolic part.   We introduce a new theoretical framework
and prove uniform energy stability and  higher
Sobolev stability. This is the first strong stability result for negative time stepping operator-splitting methods.
\end{abstract}
\section{Introduction}
We consider  the  Allen-Cahn equation
\begin{align} \label{1}
\begin{cases}
\partial_t u  =  \nu \Delta u -f (u) , \quad (t,x) \in (0, \infty) \times \Omega, \\
u \Bigr|_{t=0} =u_0,
\end{cases}
\end{align}
where  the  unknown $u=u(t,x):\, [0,\infty)\times \Omega \to \mathbb R$. The parameter $\nu>0$ is called the mobility coefficient and we fix it  as a constant for simplicity.  The nonlinear term takes
the form $f(u)=u^3-u =F^{\prime}(u)$, where $F(u) = \frac 14 (u^2-1)^2$ is the standard double well. To minimize technicality,  we  take the spatial domain $\Omega$ in \eqref{1} as the $2\pi$-periodic torus $\mathbb T=\mathbb R/ 2\pi \mathbb Z
=[-\pi,\pi]$.  With some additional work our analysis can be extended to physical dimensions $d=2, 3$.   The system \eqref{1}  arises as a $L^2$-gradient flow of a Ginzburg-Landau type energy
functional $ E(u)$, where
\begin{equation}
 E(u)= \int_{\Omega} \left( \frac 12 \nu |\nabla u|^2 + F(u) \right) dx
=\int_{\Omega} \left( \frac 12 \nu |\nabla u|^2 + \frac 14 (u^2-1)^2 \right) dx.
\end{equation}
The basic energy conservation law takes the form
\begin{equation}
\frac d {dt}  E ( u(t) ) +
\|   \partial_t u \|_2^2
=\frac d {dt}  E(u(t)) + \int_{\Omega}
|  \nu \Delta u - f(u )  |^2 dx =0.
\end{equation}
It follows that 
\begin{align}
E(u(t_2) ) \le E(u(t_1) ), \qquad \forall\, 0\le t_1<t_2.
\end{align}
Besides the $L^2$-type conservation law, there is also $L^{\infty}$-type control. 
Due to special form of the nonlinearity, for smooth solutions we have the maximum principle
\begin{align}
\| u(t,\cdot) \|_{L_x^{\infty}} \le \max\{1, \, \| u_0 \|_{L_x^{\infty}} \}, \qquad\forall\, t\ge 0.
\end{align}
 As a consequence the long-time wellposedness and regularity is not an issue for \eqref{1}. 

The objective of this work is to establish strong stability of a fourth order in time operating
splitting algorithm applied to the Allen-Cahn equation \eqref{1}. This is a part of our on-going
program to develop a new theory for the rigorous analysis of stability and convergence of
operator-splitting methods applied to dissipative-type problems. Due to various subtle  technical obstructions, there were very few rigorous results on the analysis of the operator-splitting
type algorithms for the Allen-Cahn equation,  the Cahn-Hilliard equation and similar models. 
Prior to our recent series of works \cite{LQ1a, LQ1b}, most existing results in the literature are conditional one way or another.  To put things into perspective, we briefly review a few closely related representative works and more recent developments.

\begin{itemize}
\item \underline{The work of Gidey-Reddy}. 
Gidey and Reddy  considered in \cite{Red19} a convective Cahn-Hilliard
model of the form
\begin{align} \label{1.14}
\partial_t u - \gamma \nabla \cdot \mathbf{h}(u) + \epsilon^2 \Delta^2 u
=\Delta (f(u)),
\end{align}
where $\mathbf{h}(u) =\frac 12 (u^2, u^2)$.  They adopted an operator-splitting of \eqref{1.14}  into the hyperbolic part, nonlinear diffusion part and diffusion part respectively. Several conditional results  concerning certain weak solutions were obtained. 

\item \underline{The work of Weng-Zhai-Feng}.  In \cite{Feng19},
Weng, Zhai and Feng studied a viscous Cahn-Hilliard model:
\begin{align}
(1-\alpha) \partial_t u = \Delta ( - \epsilon^2 \Delta u + f(u ) + \alpha \partial_t u),
\end{align}
where the parameter $\alpha \in (0, 1)$. The authors employed a fast explicit Strang-type
operator splitting and proved the  stability and the convergence (see Theorem 1 on pp. 7 of \cite{Feng19}) under the assumption that $A=\|\nabla u^{\operatorname{num}}\|^2_{\infty}$, $B=
\| u^{\operatorname{num}} \|_{\infty}^2$ stay bounded,  and satisfy a technical condition
$6A+8-24B>0$. Here $u^{\mathrm{num}}$ denotes the numerical solution.

\item \underline{The work of Cheng-Kurganov-Qu-Tang}.
In \cite{Tang15}, Cheng, Kurganov, Qu and Tang considered the Cahn-Hilliard equation 
\begin{align}
\partial_t u = -\nu \Delta^2 u -\Delta u + \Delta (u^3)
\end{align}
and the MBE equation
\begin{align}
\partial_t \phi = -\delta \Delta^2 \phi - \nabla \cdot ( (1-|\nabla \phi|^2) \nabla \phi ). 
\end{align}
Concerning the Cahn-Hilliard equation, the authors considered a Strang-type splitting approximation
of the form
\begin{align}
u(t+\tau) \approx S_L^{(1)}(\frac {\tau} 2) S_N^{(1)}(\tau) S_L^{(1)}(\frac {\tau}2)  u(t),
\end{align}
where
\begin{align}
&S_L^{(1)}(\frac {\tau}2) = \exp( \frac 12\tau(-\nu \Delta^2 -\Delta) ); 
\end{align}
and $w=S_N^{(1)}(\tau) a$ is the nonlinear propagator 
\begin{align}
\begin{cases}
\partial_t w = \Delta(w^3), \\
w\Bigr|_{t=0} = a.
\end{cases}
\end{align}
Various conditional results were given in \cite{Tang15} but the rigorous analysis of energy stability 
was a long-standing open problem. This problem and several related open problems were 
settled in our recent proof \cite{LQ1b}.
\end{itemize}

In recent \cite{LQ1a},  we carried out the first  energy-stability analysis of a first order operator-splitting approximation  of the Cahn-Hilliard equation.  More precisely denote $u^{\mathrm{CH}}$
as the exact PDE solution to the Cahn-Hilliard equation $\partial_t u 
=-\nu \Delta^2 u -\Delta u + \Delta(u^3)$.  We considered a splitting approximation of the form:
\begin{align}
u^{\mathrm{CH}}(t+\tau) \approx S_L^{(2)}(\tau) S_N^{(2)}(\tau)  u^{\mathrm{CH}}(t),
\end{align}
where $S_L^{(2)}(\tau)=\exp(-\tau \nu\Delta^2  )$ and  $w= S_N^{(2)}(\tau) a$ solves
\begin{align}
\frac {w - a} {\tau} =   \Delta ( a^3 -a).
\end{align}
We introduced a novel modified energy and rigorously proved monotonic decay of the new modified
energy which is coercive in $H^1$-sense.  Moreover we obtained uniform control of higher Sobolev regularity and rigorously justified the first order convergence of the method on any finite
time interval. 

In \cite{LQ1b}, we settled the difficult open problem of energy stability of the Strang-type
algorithm applied to the Cahn-Hilliard equation which was introduced in the work of 
Cheng, Kurganov, Qu and Tang \cite{Tang15}.  One should note that the new theoretical framework
developed in \cite{LQ1b} is completely different from the first order case \cite{LQ1a}. 
In the second-order case, for most generic data 
one no longer has strict energy-monotonicity at disposal and several new ideas
such as dichotomy analysis, an absorbing-set approach were introduced in \cite{LQ1b} in order
to settle long time unconditional (i.e. independent of time-step or time interval) Sobolev bounds
of the numerical solution. 

In this work we develop further the program initiated in \cite{LQ1a, LQ1b} and turn to the analysis of
 higher order in time splitting methods.  A folklore issue is that operator-splitting methods with order
higher than two require negative time-stepping, i.e. backward time integration for each
time step. In \cite{Sh89}, Sheng considered dimensional splitting for the two-dimensional parabolic 
problem $\partial_t u = a \partial_{xx} u  + b \partial_{yy} u$. Using semi-discretization one obtains
the ODE system $\partial_t \mathbf{u}= A \mathbf{u} + B \mathbf{u}$, where the matrices $A$ and
$B$ correspond to the discretization of $a\partial_{xx}$ and $b\partial_{yy}$ respectively. One is then
naturally led to the approximation of $\mathbf{u} =e^{t(A+B)} \mathbf{u}_0 \approx
e^{\frac 12 t A} e^{t B} e^{\frac 12 t A}\mathbf{u}_0$. In \cite{Sh89}, Sheng showed that if
the non-commuting matrices $A$ and $B$ have eigen-values in the left half of the complex-plane,
and one employs the approximation
\begin{align}
&e^{t(A+B)} \approx \sum_{k=1}^K \gamma_k e^{\alpha_{k,1} tA}
e^{\beta_{k,2} t B} \cdots e^{\alpha_{k,m(k)} t A} e^{\beta_{k,m(k)} t B}, \\
&\gamma_k>0, \; \alpha_{i,k}\ge 0, \; \beta_{l,k}\ge 0, 
\end{align}
then the highest order of a stable approximation is two even if $K$ is chosen to be large. Put it 
differently, Sheng's fundamental result states that for an $N^{\mathrm{th}}$-order ($N\ge 3$)
partitioned  split-step schemes, at least one of the solution operators must be applied with
negative time step, i.e. backwardly.  In \cite{Su91} Suzuki adopted an elegant time-ordering
principle from quantum mechanics and proved a general nonexistence theorem of positive decomposition for high order splitting approximations. In \cite{GK96}, Goldman and Kaper
strengthened these results further and showed that 
even with partitioned schemes, each solution operator within a convex partition must be 
performed with at least one negative/backward fractional time step.

For deterministic Hamiltonian-type systems, backward time stepping in general does not create instabilities and high-order operating splitting methods have shown promising
effectiveness \cite{TCN09}. On the other hand, due to the negative time stepping,
there are some arguments that higher-than-three operator splitting methods
cannot be applied to parabolic equations with diffusive terms (\cite{Sh89, Ch04, Book16}).
As we shall explain momentarily, these concerns are not unsubstantiated and even turn
up in the well-defined-ness of these algorithms.

Although rigorous analysis of these issues were not available before, recently Cervi and Spiteri
\cite{CSp18} considered three third-order operating-splitting methods and demonstrated
via extensive numerical simulations the effectiveness of higher order splitting
methods for representative cardiac electrophysiology simulations.  These compelling numerical
evidences propel us to re-examine in detail the stability property of general negative/backward
time-stepping methods in parabolic problems.  Indeed, the very purpose of this work is to 
break these aforementioned technical barriers and establish a new stability theory for these
problems. 

To set the stage and minimize technicality,  we consider the Allen-Cahn equation \eqref{1} posed on the $2\pi$-periodic torus $\mathbb T=[-\pi, \pi]$. To build some intuition, let $\tau>0$ and consider
\begin{align}
S_L(\tau) = \exp( \nu \tau \partial_{xx} ),
\end{align}
and let $w=S_N(\tau) a$ solve the equation
\begin{align}
\begin{cases}
\partial_t w = w-w^3; \\
w\Bigr|_{t=0} = a.
\end{cases}
\end{align}
An explicit formula for $S_N(\tau)$ is readily available, indeed it is not difficult to check 
that 
\begin{align}
S_N(\tau) a = \frac { e^{2\tau} a} { \sqrt{ 1+ (e^{2\tau}-1) a^2} }.
\end{align}
Define a second-order Strang-type propagator
\begin{align}
S_{2}^{(o)}(\tau) = S_L(\frac {\tau}2) S_N(\tau) S_L(\frac{\tau}2).
\end{align}
Following Yoshida \cite{Yos90},  we consider a 4th order integrator obtained by a symmetric
repetition (product) of the 2nd order integrator:
\begin{align}
S_4^{(o)}(\tau) = S_2^{(o)}(x_1\tau) S_2^{(o)}( -x_0\tau)  S_2^{(o)} ( x_1 \tau),
\end{align}
where
\begin{align}
x_0= \frac {2^{\frac 13}} {2-2^{\frac 13}}, \quad x_1 = \frac 1 {2-2^{\frac 13}}.
\end{align}
Somewhat surprisingly, we first show that the propagator $S_4^{(o)}(\tau)$ is ill-defined
if one does not introduce a judiciously chosen spectral cut-off.

\begin{prop}[Ill-definedness of $S_4^{(o)}(\tau)$ with a spectral cut-off] \label{prop1}
The propagator $S_4^{(o)}(\tau)$ is ill-defined in general. 
\end{prop}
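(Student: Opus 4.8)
The plan is to exhibit a single initial datum $a$ for which the composition defining $S_4^{(o)}(\tau)$ produces, at some intermediate stage, a function whose subsequent backward heat evolution $S_L(-x_0\tau/2)$ fails to lie in any reasonable function space. First I would track the action of $S_4^{(o)}(\tau)$ on the Fourier side. The operators $S_L(s)$ act diagonally as $\widehat{(S_L(s)g)}(k)=e^{-\nu s k^2}\widehat g(k)$, so a \emph{negative} argument $s<0$ multiplies the $k$-th Fourier coefficient by $e^{\nu|s|k^2}$, which grows super-exponentially in $k$. The only mechanism in the composition that could conceivably tame such growth is the nonlinear map $S_N(\tau)$, given explicitly by $S_N(\tau)a = e^{2\tau}a/\sqrt{1+(e^{2\tau}-1)a^2}$. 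The key structural observation is that $S_N(\tau)$ is a \emph{pointwise-in-}$x$ (Nemytskii) operator, hence it can never improve Fourier decay in a way commensurate with the Gaussian amplification produced by a backward heat step; in fact for generic $a$ the image $S_N(\tau)a$ is merely analytic on a strip of finite width (or only $C^\infty$ if $a^2$ touches the value making the radicand vanish — which cannot happen here since $|a|\le$ something, but even analyticity on a \emph{finite} strip is the point).

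Concretely, I would argue as follows. Write $S_4^{(o)}(\tau) = S_2^{(o)}(x_1\tau)\,S_2^{(o)}(-x_0\tau)\,S_2^{(o)}(x_1\tau)$ and expand the middle factor: $S_2^{(o)}(-x_0\tau) = S_L(-\tfrac{x_0\tau}{2})\,S_N(-x_0\tau)\,S_L(-\tfrac{x_0\tau}{2})$. Thus inside $S_4^{(o)}(\tau)$ one applies $S_L(-x_0\tau/2)$ to the function $g:=S_N(-x_0\tau)\,S_L(-x_0\tau/2)\,S_2^{(o)}(x_1\tau)a$. Choose $a$ to be, say, a fixed smooth bump (or even a trigonometric polynomial) scaled so that $\|a\|_\infty$ is small; then $S_L(x_1\tau/2)$, $S_N$, $S_L(x_1\tau/2)$ applied first produce some real-analytic function $h$ with Fourier coefficients decaying like $e^{-c|k|}$ but \emph{not faster} — already $S_L(x_1\tau/2)$ forces a genuine Gaussian-type profile only on the coefficients it acts on, but the intervening $S_N$ destroys Gaussian decay and leaves exponential decay with a finite rate governed by the analyticity width of $h$. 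Applying $S_L(-x_0\tau/2)$ then multiplies $\widehat h(k)$ by $e^{\nu x_0\tau k^2/2}$, and since $e^{\nu x_0\tau k^2/2}\,e^{-c|k|}\to\infty$, the resulting series has coefficients blowing up and defines no element of $L^2(\mathbb T)$, nor even a distribution. Hence $g$, and a fortiori $S_4^{(o)}(\tau)a$, is undefined.

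The step I expect to be the main obstacle is making rigorous the claim that the intermediate function $h$ has Fourier coefficients that do \emph{not} decay faster than some fixed exponential $e^{-c|k|}$ — i.e. ruling out a miraculous cancellation in which $S_N$ followed by a backward heat step conspires to restore Gaussian decay. The clean way to do this is an analyticity-width argument: $S_N(\tau)$ maps a function analytic on the strip $|\operatorname{Im}z|<\rho$ into a function analytic on a \emph{strictly smaller} strip $|\operatorname{Im}z|<\rho'$ with $\rho'<\rho$ (because the radicand $1+(e^{2\tau}-1)a(z)^2$ acquires zeros once $a(z)$ moves off the real axis, unless $a$ is constant), whereas the backward heat operator $S_L(-s)$ with $s>0$ requires its input to extend to an \emph{entire} function of order $2$ with appropriate Gaussian bounds — a condition manifestly violated by any function whose analyticity strip has finite width. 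Packaging this into a short lemma ("$S_N(\tau)$ shrinks the strip of analyticity for non-constant inputs; $S_L(-s)$ is defined only on a space of entire functions") yields the Proposition immediately, and the explicit $x_0>0$ guarantees a genuine backward step of size $x_0\tau/2>0$ so the obstruction is never vacuous. One should also dispatch the degenerate possibility that the input to the backward step happens to be a trigonometric polynomial (for which $S_L(-s)$ \emph{is} defined): this is excluded because $S_N(\tau)$ applied to a nonconstant trigonometric polynomial is never a polynomial — the square root in its formula is not a rational function of $e^{ix}$ — so for generic $a$ the strip is genuinely finite.
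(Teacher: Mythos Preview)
Your argument is correct and can be made rigorous, but it proceeds along a genuinely different line from the paper's own proof.

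The paper does not invoke analyticity strips or Paley--Wiener at all. Instead it chooses a very specific test function: take $u=e^{\partial_{xx}}\delta_0$, the periodic heat kernel, so that $\widehat u(k)=e^{-k^2}>0$ for every $k$. The (approximate) inverse of the nonlinear step is $w=u(1-\tau u^2)^{-1/2}=\sum_{m\ge 0}c_m\tau^m u^{2m+1}$ with all $c_m>0$. Since $\widehat u(k)>0$, each $\widehat{u^{2m+1}}(k)$ is a positive convolution, and one gets the termwise lower bound $\widehat w(k)\gtrsim \tau^m\,\widehat{u^{2m+1}}(k)\gtrsim \tau^m e^{-k^2/(2m+1)}$ for every $m$. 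Taking $m$ large relative to $1/\epsilon_0$ shows $e^{\epsilon_0 k^2}\widehat w(k)\to\infty$, so $e^{-\epsilon_0\partial_{xx}}w\notin L^2$ for any $\epsilon_0>0$. This is a completely elementary Fourier-side positivity argument, with no complex analysis.

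Your route --- locate a zero of the radicand $1+(e^{2\tau}-1)a(z)^2$ off the real axis, deduce a branch singularity and hence a finite strip of analyticity for $S_N(\tau)a$, and then observe that the net backward amount $(x_0-x_1)\tau/2>0$ overwhelms any merely exponential decay --- is more conceptual and in fact proves more: it shows $S_4^{(o)}(\tau)a$ is undefined for \emph{every} non-constant $a$, not just one carefully chosen example. The price is that you must justify the existence of such a zero; the clean way is to view $\tilde a=S_L(x_1\tau/2)a$ as a holomorphic function of $\zeta=e^{iz}$ on $\mathbb C^*$ and apply the fundamental theorem of algebra (if $a$ is a trigonometric polynomial) or big Picard at the essential singularity (otherwise). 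Your remark that the square-root formula cannot produce a trigonometric polynomial is exactly the degenerate case this handles.

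In short: the paper's proof is a two-line positivity trick tailored to a single datum, while yours is a structural Paley--Wiener argument that covers generic data but needs a small dose of complex analysis to close.
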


The proof of Proposition \ref{prop1} is given in Section 2. Armed with this important observation,
we are led to introduce a spectral cut-off condition for the propagators.  Let $M\ge 2$ be an integer
and we shall consider the projection operator $\Pi_M$ defined for $f:\mathbb T \to \mathbb R$
via the relation
\begin{align}
\Pi_M f = \frac 1 {2\pi} \sum_{|k| \le M} \widehat{f}(k) e^{i kx},
\end{align}
where $\widehat{f}(k)$ denotes the Fourier coefficient of $f$. We introduce the following
spectral cut-off condition.

\vspace{0.2cm}

\noindent
\textbf{Definition} (Spectral condition).   Let $M\ge 2$ be the spectral truncation parameter.
We shall say $\tau>0$ satisfy the spectral condition if $\tau \le l_0 M^{-2}$ for
some $l_0>0$. 

\begin{rem}
This constraint in $\tau$ is reminiscent of the CFL condition in hyperbolic problems.
Here in the parabolic setting we require that the operator $\tau \partial_{xx}$ to remain
bounded when restricted to the spectral cut-off $|k| \le M$.
\end{rem}

Let $M\ge 2$. We now consider the following modified propagators:
\begin{align}
& S^{(2)}(\tau) = \Pi_M S_L(\frac {\tau}2) S_N(\tau) \Pi_M S_L(\frac {\tau}2); \notag \\
& S^{(4)}(\tau) = S^{(2)} (x_1 \tau) S^{(2)} (-x_0 \tau) S^{(2)} (x_1 \tau).
\end{align}

\begin{thm}[Stability of negative-time splitting methods]\label{thm0}
Let $\nu>0$, $M\ge 2$ and consider the Allen-Cahn equation \eqref{1} on the one-dimensional 
$2\pi$-periodic torus $\mathbb T =[-\pi, \pi]$. Assume the spectral cut-off condition
$\tau \le l_0 M^{-2}$ for some $l_0>0$.  Assume the initial data $u^0=\Pi_M a
\in H^{k_0}(\mathbb T)$ ($k_0\ge 1$ is an integer).  Let $\tau>0$ and define
\begin{align}
u^{n+1} = S^{(4)} u^n, \quad n\ge 0.
\end{align}
There exists a constant $\tau_*>0$ depending only on $l_0$, $\|a\|_{H^1}$ and
$\nu$, such that if $0<\tau \le \tau_*$, then
\begin{align}
\sup_{n\ge 0} \| u^n \|_{H^{k_0}} \le A_1<\infty,
\end{align}
where $A_1>0$ depends on ($\| u^0\|_{H^{k_0} }$, $\nu$, $l_0$, $k_0$). 
\end{thm}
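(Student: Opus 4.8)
The plan is to establish the uniform $H^{k_0}$ bound by tracking a suitable energy-type quantity through one full step of $S^{(4)}(\tau)$ and showing that, up to controllable errors of order $\tau^2$ or higher, it behaves like the exact Allen-Cahn flow, for which energy and Sobolev bounds are available. First I would analyze the building block $S^{(2)}(\tau)$ for \emph{general} time-step sign, i.e. for $\tau$ replaced by $\pm x_i \tau$. The key observation is that although $S_L(\pm\tfrac{\tau}{2})=\exp(\pm\tfrac{\nu\tau}{2}\partial_{xx})$ is a backward heat operator when the sign is negative, its action is composed with the projection $\Pi_M$, and under the spectral condition $\tau\le l_0 M^{-2}$ the operator $\tau\partial_{xx}$ restricted to frequencies $|k|\le M$ has operator norm $\le l_0$. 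Hence $\Pi_M S_L(c\tau)\Pi_M$ is a bounded operator on every $H^s$ with norm $\le e^{|c|\nu l_0/2}$, \emph{uniformly in $M$ and $\tau$}. This is the mechanism that tames the negative-time factors: they are never applied to high frequencies, and on the retained band they contribute only an $O(1)$ multiplicative constant, with the deviation from the identity being $O(\tau)$ in the appropriate operator sense. I would record precise estimates of the form $\|(\Pi_M S_L(c\tau)\Pi_M - \Pi_M)v\|_{H^s}\lesssim \tau\|v\|_{H^{s+2}}$ together with the $H^{s+2}\to H^s$ smoothing-type bounds that let us trade derivatives against powers of $\tau$ on the truncated space.

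Next I would control the nonlinear propagator $S_N(\tau)$. Using the explicit formula $S_N(\tau)a = e^{2\tau}a/\sqrt{1+(e^{2\tau}-1)a^2}$, one checks directly that $S_N(\tau)$ maps $[-L,L]$ into $[-\max\{1,L\},\max\{1,L\}]$ (a discrete maximum principle), so $\|S_N(\tau)a\|_{L^\infty}$ stays bounded; combined with the explicit derivative structure one gets $\|S_N(\tau)a\|_{H^s}\le (1+C\tau)\|a\|_{H^s} + C\tau$ for a constant depending on $\|a\|_{L^\infty}$ and on lower norms, i.e. $S_N$ is a near-identity contraction up to the cubic nonlinearity, with $S_N(\tau)a = a + \tau(a-a^3) + O(\tau^2)$ in $H^s$ on bounded sets. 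The composition $S^{(2)}(\tau)$ then satisfies, on the truncated space, a consistency estimate $S^{(2)}(\tau)v = v + \tau(\nu\partial_{xx}v + v - v^3) + O(\tau^2)$ together with an a priori bound $\|S^{(2)}(\tau)v\|_{H^s}\le (1+C\tau)\|v\|_{H^s}+C\tau$ where $C$ depends only on $\nu,l_0,s$ and on $\|v\|_{H^1}$. Chaining the three factors of $S^{(4)}(\tau)=S^{(2)}(x_1\tau)S^{(2)}(-x_0\tau)S^{(2)}(x_1\tau)$ — here the negative middle step is harmless because of the boundedness just established — yields the analogous one-step estimate for $S^{(4)}$: $\|u^{n+1}\|_{H^s}\le (1+C\tau)\|u^n\|_{H^s}+C\tau$ with $C=C(\nu,l_0,s,\|u^n\|_{H^1})$.

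The crux is then to close the argument at the level $s=1$ first, so that $C$ becomes a genuine constant rather than one depending on a quantity we have not yet bounded. For this I would follow the philosophy of \cite{LQ1a,LQ1b}: introduce a modified energy $\mathcal E(v)$, agreeing with $E(v)$ up to $O(\tau)$ corrections, and show $\mathcal E(u^{n+1})\le \mathcal E(u^n) + C\tau^2$ along the $S^{(4)}$ iteration, where $C$ depends only on $\mathcal E(u^n)$ (hence inductively on $E(u^0)$) and $\nu,l_0$. Because each $S^{(2)}(\pm x_i\tau)$ is a perturbation of the exact flow (forward or backward) of the same gradient structure, and the backward pieces have coefficients chosen exactly so that the net order is four, the first-order energy variation cancels and only a second-order-in-$\tau$ defect survives; the spectral truncation contributes no positive energy because $\Pi_M$ is an orthogonal projection compatible with both the Dirichlet form $\int \nu|\partial_x v|^2$ and the $L^2$ structure, though the interaction of $\Pi_M$ with the quartic term $\int F(v)$ must be handled by an $L^\infty$-bound (available since all iterates are band-limited with controlled $L^2$ norm, giving $\|u^n\|_{L^\infty}\le C M^{1/2}\|u^n\|_{L^2}$ — but we need a bound independent of $M$, so instead one uses the $H^1\hookrightarrow L^\infty$ embedding in one dimension, which is exactly why the hypothesis is $\|a\|_{H^1}$). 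This gives $\sup_n E(u^n)\le E(u^0)+C\tau_*$ and hence $\sup_n\|u^n\|_{H^1}\le A_0$ once $\tau_*$ is small, which pins down the constant $C$ in the general $H^s$ estimate. Finally, feeding $A_0$ back into the one-step $H^{k_0}$ inequality $\|u^{n+1}\|_{H^{k_0}}\le(1+C\tau)\|u^n\|_{H^{k_0}}+C\tau$ and iterating is not enough by itself — it only gives exponential growth $e^{CT}$ on $[0,T]$ — so to get a bound \emph{uniform in $n$} one upgrades this to an absorbing-set argument as in \cite{LQ1b}: show that when $\|u^n\|_{H^{k_0}}$ exceeds a threshold $R$ (depending on $A_0,\nu,l_0,k_0$) the cubic dissipation in the true energy variation beats the $O(\tau)$ forcing so that $\|u^{n+1}\|_{H^{k_0}}\le\|u^n\|_{H^{k_0}}$, while below $R$ one step can grow the norm by at most a bounded factor; this traps all iterates in a ball of radius $A_1=A_1(\|u^0\|_{H^{k_0}},\nu,l_0,k_0)$.

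I expect the main obstacle to be the energy-monotonicity step at $k_0=1$: unlike the exact flow, the split-step map does not decrease $E$ exactly, and because the fourth-order scheme lacks the strict monotonicity that the first-order scheme of \cite{LQ1a} enjoyed, one is forced into the dichotomy/absorbing-set machinery of \cite{LQ1b}, now further complicated by the presence of genuinely backward linear evolutions inside each $S^{(2)}$ factor. Making the $O(\tau^2)$ energy-defect estimate truly independent of $M$ — so that the spectral cut-off is a regularization one removes cleanly rather than a crutch the bound depends on — is the delicate point, and it is precisely here that the one-dimensional $H^1\hookrightarrow L^\infty$ embedding and the uniform operator bound $\|\Pi_M S_L(c\tau)\Pi_M\|_{H^s\to H^s}\le e^{|c|\nu l_0/2}$ under the spectral condition do the essential work.
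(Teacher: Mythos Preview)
Your one-step analysis (boundedness of $\Pi_M S_L(c\tau)\Pi_M$ under the spectral condition, near-identity behavior of $S_N$, and the resulting $(1+C\tau)$-type one-step $H^s$ bound) is essentially what the paper does in Lemma~\ref{lemI5.1} and Lemma~\ref{lemI5.2}, so that part is fine. The genuine gap is your $H^1$ closing step. You propose to show $\mathcal E(u^{n+1})\le \mathcal E(u^n)+C\tau^2$ and conclude $\sup_n E(u^n)\le E(u^0)+C\tau_*$; but summing an $O(\tau^2)$ defect over $n$ steps gives $O(n\tau^2)=O((n\tau)\tau)$, which is bounded only on finite time intervals $n\tau\le T$, not uniformly in $n$. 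So the modified-energy route, as you have stated it, does not close. Moreover, the $O(\tau^2)$ expansion of $S^{(4)}(\tau)$ itself (the analogue of Remark~\ref{remI5.2}) requires a priori control of a \emph{high} Sobolev norm of $u^n$, not just $H^1$; you cannot assume that at this stage.

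The paper's resolution is a dichotomy on the \emph{energy flux} $\|\nu\partial_{xx}u^n-\Pi_M((u^n)^3)+u^n\|_2$, applied at the $H^1$/energy level (not at the $H^{k_0}$ level as you suggest). First, the crude one-step bound is iterated for $O(1/\tau)$ steps, during which the solution is smoothed to $H^{80}$ (Lemma~\ref{lemI5.2}). Then: if the flux is $\le 1$, elliptic regularity forces $\|u^n\|_{H^{80}}\le C^{(o)}_\nu$ \emph{absolutely}, so $E(u^{n+1})$ is bounded by a fixed constant $C^{(U)}_{\nu,l_0}$ regardless of $E(u^n)$ (Lemma~\ref{lemI5.3}); if the flux is $>1$, the comparison with the IMEX scheme $\tilde S(\tau)$ shows strict energy decrease $E(u^{n+1})<E(u^n)$ (Lemma~\ref{lemI5.4}). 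A contradiction argument then traps $E(u^n)$ under a fixed ceiling $G$ for all $n$ (Theorem~\ref{thmI5.1}). Once the uniform $H^1$ bound is in hand, higher $H^{k_0}$ regularity follows from the \emph{smoothing} estimate of Lemma~\ref{lemI5.2}, not from a separate absorbing-set argument at the $H^{k_0}$ level; your proposed mechanism ``cubic dissipation beats $O(\tau)$ forcing in $H^{k_0}$'' is neither what the paper does nor obviously available, since $-u^3$ gives no sign in higher Sobolev inner products.
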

\begin{rem}
One can also show the fourth-order convergence of the operator splitting
approximation. Namely if $u^0\in H^{80}(\mathbb T)$ and let $u$ be
the exact PDE solution corresponding to initial data $a$. Let $0<\tau<\tau_*$
bet the same as in Theorem \ref{thm0}. Then for any $T>0$, we have
\begin{align}
\sup_{n\ge 1, n\tau \le T}  \| u^n - u(n\tau, \cdot ) \|_{L^2(\mathbb T)}
\le C (\tau^4 + M^{-10}),
\end{align}
where $C>0$ depends on ($\nu$, $l_0$, $\|u^0\|_{H^{80}}$, $T$).
The regularity assumption on initial data can be lowered. We shall refrain
from stating such a pedestrian result here and leave its justification
to interested readers as exercises.
\end{rem}

The rest of this paper is organized as follows. In Section $2$ we set up the notation and collect
some preliminary lemmas.  In Section $3$ we give the proof of Theorem \ref{thm0}.
\section{Notation and preliminaries}

For any two positive quantities $X$ and $Y$, we shall write $X\lesssim Y$ or $Y\gtrsim X$ if
$X \le  CY$ for some  constant $C>0$ whose precise value is unimportant.
We shall write $X\sim Y$ if both $X\lesssim Y$ and $Y\lesssim X$ hold.
We write $X\lesssim_{\alpha}Y$ if the
constant $C$ depends on some parameter $\alpha$.
We shall
write $X=O(Y)$ if $|X| \lesssim Y$ and $X=O_{\alpha}(Y)$ if $|X| \lesssim_{\alpha} Y$.

We shall denote $X\ll Y$ if
$X \le c Y$ for some sufficiently small constant $c$. The smallness of the constant $c$ is
usually clear from the context. The notation $X\gg Y$ is similarly defined. Note that
our use of $\ll$ and $\gg$ here is \emph{different} from the usual Vinogradov notation
in number theory or asymptotic analysis.

For any $x=(x_1,\cdots, x_d) \in \mathbb R^d$, we denote $|x| =|x|_2=\sqrt{x_1^2+\cdots+x_d^2}$, and
$|x|_{\infty} =\max_{1\le j \le d} |x_j|$.
Also occasionally we use the Japanese bracket notation:
$\langle x \rangle =(1+|x|^2)^{\frac 12}.$

We denote by $\mathbb T^d=[-\pi, \pi]^d = \mathbb R^d/2\pi \mathbb Z^d$ the usual
$2\pi$-periodic torus.
For $1\le p \le \infty$ and any function $f:\, x\in \mathbb T^d \to \mathbb R$, we denote
the Lebesgue $L^p$-norm of $f$ as
\begin{align*}
\|f \|_{L^p_x(\mathbb T^d)} =\|f\|_{L^p(\mathbb T^d)} =\| f \|_p.
\end{align*}
If $(a_j)_{j \in I}$ is a sequence of complex numbers
and $I$ is the index set, we denote the discrete $l^p$-norm
as
\begin{equation}
\| (a_j) \|_{l_j^p(j\in I)} = \| (a_j) \|_{l^p(I)} =
\begin{cases}
 {\displaystyle \left(\sum_{j\in I} |a_j|^p\right)^{\frac 1p}}, \quad 0<p<\infty, \\
 \sup_{j\in I} |a_j|, \quad \qquad p=\infty.
 \end{cases}
 \end{equation}
 For example,
$ \| \hat f(k) \|_{l_k^2(\mathbb Z^d)} = \left(\sum_{k \in \mathbb Z^d} |\hat f(k)|^2\right)^{\frac 12}$.
If $f=(f_1,\cdots,f_m)$ is a vector-valued function, we denote
$|f| =\sqrt{\sum_{j=1}^m |f_j|^2}$, and
$\| f\|_p = \| ({\sum_{j=1}^m f_j^2})^{\frac 12} \|_p$.
We use similar convention for the corresponding discrete $l^p$ norms for the vector-valued
case.


We use the following convention for the Fourier transform pair:
\begin{equation} \label{eqFt2}
\hat f(k) = \int_{\mathbb T^d} f(x) e^{- i k\cdot x} dx, \quad
 f(x) =\frac 1 {(2\pi)^d}\sum_{k\in \mathbb Z^d} \hat f(k) e^{ ik \cdot x},
\end{equation}
and denote for $0\le s \in \mathbb R$,
\begin{subequations}
\begin{align}
&\|f \|_{\dot H^s} = \|f \|_{\dot H^s(\mathbb T^d)} = \| |\nabla|^s f \|_{L^2(\mathbb T^d)}
\sim \|  |k|^s \hat f (k) \|_{l^2_k (\mathbb Z^d)}, \\
& \| f \|_{H^s} = \sqrt{\| f \|_2^2 + \| f\|_{\dot H^s}^2}  \sim \| \langle
 |k| \rangle^s \hat f(k) \|_{l^2_k(\mathbb Z^d)}.
\end{align}
\end{subequations}

\begin{proof}[Proof of Proposition \ref{prop1}]
To prove Proposition \ref{prop1}, it suffices for us to examine the following statement.
Consider $u= e^{\partial_{xx}} \delta_0$ where $\delta_0$ is the periodic
Dirac comb on $\mathbb T$. Let $0<\tau \ll 1$ and consider
\begin{align}
u= w ( 1+ \tau w^2)^{-\frac 12}.
\end{align}
\underline{Claim}: $e^{-\epsilon_0 \partial_{xx}} w \notin L^2$ for any $\epsilon_0>0$. 

\underline{Proof of Claim}. 
Observe that the Fourier coefficients of $u$ are all positive and
\begin{align}
\widehat{u}(k) \gtrsim \tau^m  \widehat{w^{2m+1}}(k).
\end{align}
Clearly the desired conclusion follows.
\end{proof}

\section{Proof of Theorem \ref{thm0} }

\begin{lem}[One-step $H^k$ stability] \label{lemI5.1}
Let $\nu>0$, $M\ge 2$ and assume the spectral condition $\tau M^{-2} \le l_0$ for some constant $l_0>0$. 
Suppose $a \in H^k(\mathbb T)$, $k\ge 1$ and $\|a\|_{H^k} \le A_0$. Let $0\le y_1 \le y_2<\infty$. There
exists $\tau_1=\tau_1(\nu, k,  y_1,y_2, A_0, l_0)>0$ sufficiently small such that
if $0<\tau \le \tau_1$, then
\begin{align}
& \|\Pi_M S_L(-y_1 \tau) S_N(\tau) S_L(y_2 \tau) a \|_{H^k} \le e^{c_1 \tau} \| a \|_{H^k}, 
\notag \\
& \|\Pi_M S_L(-y_1 \tau) S_N(-\tau) S_L(y_2 \tau) a \|_{H^k} \le e^{c_1 \tau} \| a \|_{H^k},
\notag \\
& \|\Pi_M S_L(y_2 \tau) S_N(\tau)  \Pi_M S_L(-y_1 \tau) a \|_{H^k} \le e^{c_1 \tau} \| a \|_{H^k},
\notag \\
& \|\Pi_M S_L(y_2 \tau) S_N(-\tau)  \Pi_M S_L(-y_1 \tau) a \|_{H^k} \le e^{c_1 \tau} \| a \|_{H^k},
\end{align}
where $c_1>0$ depends on ($\nu$, $k$, $y_1$, $y_2$, $A_0$, $l_0$).
\end{lem}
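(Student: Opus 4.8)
The plan is to isolate, inside each of the four compositions, a \emph{main part} that is visibly a contraction on $H^k$, and then to show that the leftover piece — which always comes from the nonlinear propagator — has $H^k$ norm at most $O(\tau\|a\|_{H^k})$; summing the two produces the factor $e^{c_1\tau}$. Since $\Pi_M$ and every $S_L(s)$ is a Fourier multiplier they commute pairwise, and $\Pi_M$, as well as $S_L(s)$ for $s\ge 0$, is a contraction on $H^k$. Write $S_N(\pm\tau)=I+R^{\pm}$ with $R^{\pm}b:=S_N(\pm\tau)b-b$. Substituting this into any of the four compositions, the contribution of the $I$ term collapses — after commuting the multipliers and using $\Pi_M^2=\Pi_M$ — to $\Pi_M S_L((y_2-y_1)\tau)a$, whose $H^k$ norm is at most $\|a\|_{H^k}$ precisely because $y_2-y_1\ge 0$. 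This is where the hypothesis $y_1\le y_2$ is used, and it is the structural fact the lemma rests on.

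The core estimate I would prove is: if $\|b\|_{H^1}\le B$ then $\|R^{\pm}b\|_{H^k}\le C\tau\|b\|_{H^k}$, with $C=C(k,B)$, provided $\tau$ is small depending on $B$ — this restriction being needed so that $S_N(-\tau)b$ is defined at all, i.e. $(1-e^{-2\tau})\|b\|_\infty^2<1$, which one arranges by $\tau_1\lesssim B^{-2}$ (recall $H^1(\mathbb T)\hookrightarrow L^\infty$ in one dimension, so $\|b\|_\infty\lesssim B$). To see this, write $R^{\pm}b=\pm\int_0^\tau N(S_N(\pm s)b)\,ds$ with $N(w)=w-w^3$; the maximum principle — the explicit formula together with the lower bound on $1+(e^{-2s}-1)b^2$ for the backward sign — gives $\|S_N(\pm s)b\|_\infty\lesssim 1+\|b\|_\infty$ for $0\le s\le\tau$, and the one-dimensional multiplication estimate $\|fg\|_{H^k}\lesssim_k\|f\|_\infty\|g\|_{H^k}+\|f\|_{H^k}\|g\|_\infty$ (valid for $k\ge 1$) gives $\|N(w)\|_{H^k}\lesssim_k\|w\|_{H^k}(1+\|w\|_\infty^2)$. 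Hence $\phi(s):=\|S_N(\pm s)b-b\|_{H^k}$ satisfies $|\phi'(s)|\lesssim_{k,B}\|b\|_{H^k}+\phi(s)$, and Gronwall (with $\tau\le 1$) yields $\phi(\tau)\le C\tau\|b\|_{H^k}$. The one point to be careful about is that $N(0)=0$, so this estimate is \emph{multiplicative} in $\|b\|_{H^k}$ rather than an additive $O(\tau)$; a merely additive bound would be useless when $\|a\|_{H^k}$ is small.

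To assemble: for the first two compositions the leftover is $S_L(-y_1\tau)\,\Pi_M R^{\pm}(S_L(y_2\tau)a)$. The function $\Pi_M R^{\pm}(S_L(y_2\tau)a)$ is supported on $|\ell|\le M$, so $S_L(-y_1\tau)$ amplifies it by at most $e^{\nu y_1\tau M^2}\le e^{\nu y_1 l_0}$ — this is exactly where the spectral condition $\tau M^2\le l_0$ enters — and, by the core estimate applied with $b=S_L(y_2\tau)a$ (whose $H^1$ and $H^k$ norms are $\le$ those of $a$), its $H^k$ norm is $\le C\tau\|a\|_{H^k}$. Together with the main part this gives $\|\Pi_M S_L(-y_1\tau)S_N(\pm\tau)S_L(y_2\tau)a\|_{H^k}\le(1+Ce^{\nu y_1 l_0}\tau)\|a\|_{H^k}\le e^{c_1\tau}\|a\|_{H^k}$ once $\tau$ is small. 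For the last two compositions the backward operator now hits $a$ first, so $b':=\Pi_M S_L(-y_1\tau)a$ has $\|b'\|_{H^k}\le e^{\nu y_1 l_0}\|a\|_{H^k}$ (and likewise in $H^1$); the core estimate (with $B\sim e^{\nu y_1 l_0}\|a\|_{H^1}$) gives $\|R^{\pm}b'\|_{H^k}\le C\tau\|b'\|_{H^k}$, the trailing $\Pi_M S_L(y_2\tau)$ is a contraction, so the leftover $\Pi_M S_L(y_2\tau)R^{\pm}b'$ has $H^k$ norm $\le Ce^{\nu y_1 l_0}\tau\|a\|_{H^k}$, while the main part is again $\Pi_M S_L((y_2-y_1)\tau)a$. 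Taking $c_1$ to be the maximum, and $\tau_1$ the minimum, over the four cases of the constants produced yields the claim, with $c_1,\tau_1$ depending on $(\nu,k,y_1,y_2,A_0,l_0)$.

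The genuine content is the interplay just described: by Proposition \ref{prop1} the backward heat operator is unbounded on its own, but once $\Pi_M$ is inserted and $\tau M^2\le l_0$ is imposed it is bounded by the \emph{$\tau$-independent} constant $e^{\nu y_1 l_0}$, and this constant only ever multiplies the nonlinear remainder $R^{\pm}$, never the full solution, because $y_1\le y_2$ forces the linear main part to be a contraction. I expect the only mildly delicate computation to be the Gronwall bound giving the multiplicative form of the core estimate, together with the $\tau$-smallness needed for $S_N(-\tau)$ to be defined; everything else is bookkeeping with commuting Fourier multipliers and the $H^k$-contractivity of the forward heat flow.
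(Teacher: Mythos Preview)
Your proposal is correct and follows essentially the same approach as the paper's proof: both rest on the estimate $\|S_N(\pm\tau)b-b\|_{H^k}\lesssim\tau(\|b\|_{H^k}+\|b\|_{H^k}^3)$ obtained from a Gr\"onwall argument on the ODE (using that $H^k(\mathbb T)$ is an algebra for $k\ge 1$), combined with the spectral condition to tame $S_L(-y_1\tau)\Pi_M$. Your write-up is more explicit than the paper's --- you articulate the decomposition $S_N=I+R^{\pm}$, name where $y_1\le y_2$ enters (making the collapsed linear part $\Pi_M S_L((y_2-y_1)\tau)$ a contraction), and separate the $H^1$-dependence of the constant from the multiplicative factor $\|b\|_{H^k}$ --- but the substance is the same.
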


\begin{rem} \label{remI5.1}
Define
\begin{align}
&T_1 a= \Pi_M S_L(-y_1 \tau) S_N(\tau) S_L(y_2 \tau); \\
&T_2 a= \Pi_M S_L(-y_1 \tau) S_N(-\tau) S_L(y_2 \tau) a; \\
&T_3 a= \Pi_M S_L(y_2 \tau) S_N(\tau)  \Pi_M S_L(-y_1 \tau) a; \\
&T_4 a =\Pi_M S_L(y_2 \tau) S_N(-\tau)  \Pi_M S_L(-y_1 \tau) a.
\end{align}
Later we shall apply Lemma \ref{lemI5.1} $n$-times. In particular we need to bound the expression
\begin{align}
     \| \underbrace{T_{i_1} \cdots T_{i_n}}_{\text{$n$ times}} a \|_{H^k}.
\end{align}     
Since the constant $c_1$ depends on 
the $H^k$-norm of the iterates, it is of importance to give uniform control of the $H^k$-norm.
To resolve this issue, we first choose $A_0$, $\tau_1$ and $c_1$ such that
\begin{align}
\| a\|_{H^k} \le \frac 1{10} A_0, \quad \tau_1=\tau_1(\nu, k, y_1,y_2,k, A_0, l_0),
\quad c_1=c_1(\nu, k, y_1,y_2,A_0, l_0).
\end{align}
We choose $n$ such that
\begin{align}
n \tau \le \frac 1 {c_1}.
\end{align}
Clearly in the course of iteration, the $H^k$-norm of the iterates never exceeds $A_0$.  In particular
\begin{align}
\| T_{i_1} \cdots T_{i_n} a \|_{H^k} \le e^{c_1 n\tau} \|a\|_{H^k} \le e \cdot \frac 1{10} A_0
\le A_0.
\end{align}
\end{rem}

\begin{proof}[Proof of Lemma \ref{lemI5.1}]
We begin by noting that for the ODE
\begin{align}
\partial_t w = \pm w \pm w^3; 
\end{align}
we have the estimate (note that $H^k(\mathbb T)$ is an algebra for $k\ge 1$)
\begin{align}
\frac d{dt} \|w \|_{H^k} \lesssim \| w\|_{H^k} + \| w\|_{H^k}^3.
\end{align}
It follows that for $\tau>0$ sufficiently small,
\begin{align}
&\sup_{0\le t\le \tau} \| S_N(\pm t) b \|_{H^k} \le 2 \| b \|_{H^k}; \\
&\| S_N(\tau) b - b \|_{H^k} \le \tau \cdot O(\| b\|_{H^k} + \| b\|_{H^k}^3).
\end{align}
The desired estimates then easily follows from the above using the spectral condition.
\end{proof}

\begin{lem}[Multi-step $H^1$ stability and higher regularity] \label{lemI5.2}
Let $\nu>0$, $M\ge 2$ and assume the spectral condition $\tau M^{-2} \le l_0$ for some constant $l_0>0$. 
Suppose $a \in H^1(\mathbb T)$ and $\|a\|_{H^1} \le B_1$ for some constant $B_1>0$. 
Define 
\begin{align}
& S^{(2)}(\tau) = \Pi_M S_L(\frac {\tau}2) S_N(\tau) \Pi_M S_L(\frac {\tau}2); \notag \\
& S^{(4)}(\tau) = S^{(2)} (x_1 \tau) S^{(2)} (-x_0 \tau) S^{(2)} (x_1 \tau),
\end{align}
where $x_0 = \frac {2^{\frac 13}} {2-2^{\frac 13}} \approx 1.7$,
$x_1= \frac 1 {2-2^{\frac 13}} \approx 1.35$.  For $n\ge 1$, define
\begin{align}
u^{n} = S^{(4)} (\tau) u^{n-1},
\end{align}
where $u^0=a$.  There
exist $c_1=c_1(\nu, l_0,  B_1)>0$  and
$\tau_2=\tau_2(\nu, l_0, B_1)>0$ such that if $0<\tau \le \tau_2$ (we may assume $c_1\tau_2\le 0.01$), then
\begin{align}
&\| u^n\|_{H^1} \le e^{c_1 \tau} \| u^{n-1} \|_{H^1}, \quad\, 1\le n \le \frac 1 {c_1\tau}; 
\notag \\
& \sup_{1\le n \le \frac 1{c_1\tau} } \| u^n \|_{H^1} \le 3B_1; \notag \\
& \sup_{\frac 1{10c_1\tau} \le n \le \frac 1 {c_1\tau} }
\| u^n \|_{H^{80}} \le B_2, \label{I5.16}
\end{align}
where $B_2>0$ depends on ($\nu$, $l_0$, $B_1$). 
\end{lem}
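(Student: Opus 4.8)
The plan is to establish the three assertions of Lemma~\ref{lemI5.2} in order, bootstrapping from the one-step bound of Lemma~\ref{lemI5.1}. First I would unwind $S^{(4)}(\tau)$ into a composition of six elementary propagators of the type $T_{i}$ appearing in Remark~\ref{remI5.1}. Indeed, $S^{(2)}(x_1\tau)S^{(2)}(-x_0\tau)S^{(2)}(x_1\tau)$ is, after regrouping the $S_L$ half-steps at the junctions, a product of operators each of the form $\Pi_M S_L(-y_1\tau)S_N(\pm\tau)S_L(y_2\tau)$ or $\Pi_M S_L(y_2\tau)S_N(\pm\tau)\Pi_M S_L(-y_1\tau)$, with the $y_j$ drawn from the finite set $\{x_1/2,\,x_0/2,\,(x_1-x_0)/2,\ldots\}$ determined by $x_0,x_1$. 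The signs $\pm$ appear precisely because the middle factor $S^{(2)}(-x_0\tau)$ runs the nonlinear ODE backward; this is exactly the ``negative time'' the title refers to, and the point is that Lemma~\ref{lemI5.1} already absorbed it, since the ODE $\partial_t w = \pm w \pm w^3$ is controlled in $H^k$ in either time direction over a short interval.

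Second, for the $H^1$ bound I would apply Lemma~\ref{lemI5.1} with $k=1$, $A_0$ a fixed multiple of $B_1$ (say $A_0 = 30 B_1$, chosen so that $\|a\|_{H^1}\le \tfrac1{10}A_0$), and the finite list of $(y_1,y_2)$ pairs above; this yields a single constant $c_1=c_1(\nu,l_0,B_1)$ and $\tau_2$ so that each elementary step costs at most a factor $e^{c_1\tau/6}$ in $H^1$, hence $\|S^{(4)}(\tau)b\|_{H^1}\le e^{c_1\tau}\|b\|_{H^1}$ for $\|b\|_{H^1}\le A_0$. The subtlety is the circular dependence flagged in Remark~\ref{remI5.1}: the constant $c_1$ depends on the size of the iterates. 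This is resolved exactly as in that remark — restrict to $n\le 1/(c_1\tau)$, so the accumulated growth is $e^{c_1 n\tau}\le e$, keeping every iterate inside the ball of radius $e\cdot B_1 \le 3B_1 \le A_0/10$ on which the constants were fixed; a standard continuous-induction / bootstrap argument then closes, giving the first two displayed inequalities.

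Third, for the higher-regularity gain \eqref{I5.16} I would exploit the smoothing of the linear heat semigroup $S_L$ together with the spectral cut-off $\Pi_M$. The mechanism: in one application of $S^{(4)}(\tau)$ there is at least one \emph{forward} half-step $S_L(c\tau)$ with $c>0$ (the outer $x_1/2$ junctions), which maps $H^1$ boundedly into $H^{80}$ with norm $\lesssim \tau^{-39.5}$ by the parabolic smoothing estimate $\||\nabla|^s e^{\nu t\partial_{xx}}\|_{L^2\to L^2}\lesssim (\nu t)^{-s/2}$; meanwhile the backward half-steps only act on $\Pi_M$-truncated functions, where $S_L(-c\tau)$ has operator norm $\le e^{\nu c\tau M^2}\le e^{\nu c l_0}=O(1)$ by the spectral condition. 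Concretely, one runs $1/(10c_1\tau)$ steps in $H^1$ using the second bound, then observes that within the next window each step's $H^{80}$ norm is controlled by: (i) applying the $H^1$ bound to the tail, (ii) using smoothing on one forward half-step to jump to $H^{80}$, and (iii) propagating $H^{80}$ forward with the $k=80$ case of Lemma~\ref{lemI5.1} (whose constant now legitimately depends on the $H^{80}$ size, which we control inductively as before). The time window $[\tfrac1{10c_1\tau},\tfrac1{c_1\tau}]$ is long enough that every point in it is preceded by at least one full step, so the smoothing has been applied at least once.

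The main obstacle is the last step. The naive bound on the first $H^1\to H^{80}$ jump is $\tau^{-39.5}$, which blows up as $\tau\to 0$ and is useless for a \emph{uniform} bound $B_2$ independent of $\tau$. The fix is that one does not jump all the way in a single forward half-step but rather lets regularity accumulate over the $O(1/(c_1\tau))$ steps of the first window: each step gains a fixed amount of smoothing against a geometrically controlled $H^1$ backdrop, and summing a telescoping/Duhamel-type series $\sum_{j} (\nu j\tau)^{-s/2}\,\tau \cdot(\text{bounded nonlinear source})$ converges to an $O(1)$ quantity precisely because $\int_0^{1/c_1} t^{-s/2}\,dt$ would diverge for large $s$ — so instead one bootstraps regularity level by level ($H^1\to H^3\to H^5\to\cdots\to H^{80}$), at each stage using the already-established bound one level down as the source, and at each stage the relevant exponent is $s=2$, for which $\int_0^{T} t^{-1}\,dt$ still diverges but $\sum_{j=1}^{N}(j\tau)^{-1}\tau \sim \log(1/\tau)$, which is then beaten by the geometric decay coming from $e^{-\nu j\tau}$-type factors in the truncated heat operator's low-frequency behavior. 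Managing this logarithm-versus-geometric-decay balance, uniformly in $\tau$ and $M$ under only the constraint $\tau M^{-2}\le l_0$, is the delicate point; everything else is the routine algebra of composing the six elementary propagators and tracking constants.
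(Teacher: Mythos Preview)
Your overall strategy matches the paper's: decompose $S^{(4)}(\tau)$ into blocks fitting Lemma~\ref{lemI5.1}, iterate as in Remark~\ref{remI5.1} for the $H^1$ bound, then bootstrap regularity via a discrete Duhamel formula. Two small corrections. First, $S^{(4)}(\tau)$ contains exactly three $S_N$-factors, so after merging adjacent $S_L$ half-steps it decomposes into \emph{three} (not six) elementary blocks; the paper writes this explicitly as $S^{(4)}(\tau)=T_A T_B T_C$. Second, for the smoothing the paper obtains the Duhamel structure more cleanly than you do: rather than isolating a single forward half-step and bounding the backward ones by the spectral condition, it uses $2x_1-x_0=1$ together with $\|S_N(\pm c\tau)g-g\|_{H^k}=O(\tau)\|g\|_{H^k}$ (valid once $\tau\|g\|_{H^1}^2\ll 1$) to write $S^{(4)}(\tau)f=S_L(\tau)f+\tau\tilde f$ with $\|\tilde f\|_{H^1}\lesssim\|f\|_{H^1}$, which makes $u^n=S_L(n\tau)u^0+\tau\sum_j S_L((n-1-j)\tau)\tilde f^j$ immediate.

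There is, however, a genuine gap in your smoothing bootstrap. You propose gaining two derivatives per stage ($H^1\to H^3\to\cdots$), producing the divergent Riemann sum $\tau\sum_j(j\tau)^{-1}\sim\log(1/\tau)$, and claim this is repaired by ``$e^{-\nu j\tau}$-type'' decay. That does not work: on the window $n\tau\le 1/c_1$ where the entire argument lives, $e^{-\nu j\tau}\ge e^{-\nu/c_1}$ is just a positive constant, so the logarithm survives, and after $\sim 40$ iterations you carry a factor $(\log(1/\tau))^{40}$ rather than a bound uniform in $\tau$. The standard fix is to gain strictly fewer than two derivatives per stage (say one), for which $\tau\sum_{j=1}^{n}(j\tau)^{-1/2}\lesssim(n\tau)^{1/2}\lesssim 1$ uniformly; the endpoint term with zero smoothing time is then absorbed via the spectral cut-off, $\tau\|\tilde f^{n-1}\|_{H^{k+1}}\le\tau M\|\tilde f^{n-1}\|_{H^k}\le(\tau l_0)^{1/2}\|\tilde f^{n-1}\|_{H^k}$. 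With this adjustment your plan goes through and coincides with the paper's (admittedly terse) ``bootstrap the higher regularity from this''.
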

\begin{proof}[Proof of Lemma \ref{lemI5.2}]
Observe that
\begin{align}
S^{(4)}(\tau)
&=\Pi_M S_L(\frac {x_1}2\tau)
S_N(x_1\tau) \Pi_M S_L(-\frac {x_0-x_1}2 \tau) S_N(-x_0 \tau)
\Pi_M S_L(-\frac {x_0-x_1}2\tau) S_N(x_1 \tau) S_L(\frac{x_1}2 \tau) \notag \\
& = \underbrace{\Pi_M S_L(\frac {x_1}2\tau) S_N(x_1\tau) S_L(-\frac{x_1-\epsilon_1} 2 \tau)}_{=:T_A}
\; \underbrace{\Pi_M S_L(\frac{2x_1-x_0-\epsilon_1} 2 \tau) S_N(-x_0\tau)
S_L (-\frac {x_0-x_1} 4 \tau)}_{=:T_B} \notag \\
& \qquad \underbrace{S_L(-\frac{x_0-x_1} 4 \tau) S_N(x_1\tau) S_L(\frac{x_1}2 \tau)
}_{=:T_C},
\end{align}
where $\epsilon_1=0.01$.  Clearly the operators $T_A$, $T_B$, $T_C$ fulfill the conditions of
Lemma \ref{lemI5.1}. The $H^1$ estimates then follow easily from the computations outlined
 in Remark \ref{remI5.1} with some necessary adjustment of the constants.

To establish \eqref{I5.16}, we note that  for $k\ge 1$,
\begin{align}
\| S_N(\tau) f - f \|_{H^k} \lesssim O(\tau) \| f \|_{H^k},
\end{align}
provided $\tau \|f \|_{H^1}^2 \ll  1$.  We then rewrite $S^{(4)}(\tau)f$ as
\begin{align}
S^{(4)}(\tau) f = S_L( (2x_1-x_0) \tau) f + \tau \tilde f = S_L(\tau) f +\tau \tilde f,
\end{align}
where $\|\tilde f\|_{H^1} \lesssim \| f \|_{H^1}$. One can then bootstrap the higher
regularity from this. We omit further details.
\end{proof}

\begin{rem} \label{remI5.2}
It is not difficult to check that under the assumption of uniform high Sobolev regularity, 
we have (below we assume $f=\Pi_Mf$)
\begin{align}
 S^{(4)}(\tau)  f  &= S_L(\tau)(  f + \tau ( f - \Pi_M( f^3)  ))+O(\tau^2) \notag \\
 & =\underbrace{ (1-\nu \tau \partial_{xx})^{-1} f + \tau (1-\nu \tau \partial_{xx})^{-1}
 (f - \Pi_M (f^3) ) }_{=:\tilde S(\tau) f} +O(\tau^2).
\end{align}
It follows that
\begin{align}
E(S^{(4)}(\tau) f) = E(\tilde S(\tau) f ) +O(\tau^2).
\end{align}
\end{rem}

\begin{lem}[Control of the energy flux] \label{lemI5.3}
Let $\nu>0$ and $M\ge 2$. Suppose $f \in H^2(\mathbb T)$ satisfies $f=\Pi_M f$ and
\begin{align} \label{I5.22}
\| \nu \partial_{xx} f - \Pi_M( f^3) +f\|_2 \le 1.
\end{align}
Then 
\begin{align} \label{5.23}
\| f \|_{H^{80}(\mathbb T)} \le C^{(o)}_{\nu},
\end{align}
where $C^{(o)}_{\nu}>0$ depends only on $\nu$.  Furthermore if 
$\tau \le l_0 M^{-2}$ and $0<\tau \le \tau^{(0)}(\nu, l_0)$
where $\tau^{(0)} (\nu,l_0)>0$ is a sufficiently small constant depending  on ($\nu$, $l_0$),  then
\begin{align} \label{CnuU}
E( S^{(4)}(\tau) f ) \le C^{(U)}_{\nu,l_0},
\end{align}
where $C^{(U)}_{\nu,l_0}>0$ depends only on  ($\nu$, $l_0$).
\end{lem}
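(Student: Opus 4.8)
The plan is to extract first an $H^1$ bound on $f$ from the flux hypothesis \eqref{I5.22}, then to upgrade it to the regularity bound \eqref{5.23} by elliptic bootstrapping, and finally to deduce \eqref{CnuU} from the $H^1$ control together with the one-step $H^1$ stability of Lemma \ref{lemI5.1}.

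\emph{Step 1 ($H^1$ control).} Put $g := \nu\partial_{xx}f - \Pi_M(f^3) + f$, so $\|g\|_2 \le 1$, and note $g = \Pi_M g$ since $f = \Pi_M f$. Pairing $g = \nu\partial_{xx}f - \Pi_M(f^3) + f$ with $f$ in $L^2(\mathbb T)$ and using $\langle \Pi_M(f^3), f\rangle = \langle f^3, \Pi_M f\rangle = \|f\|_4^4$ yields
\begin{align}
\nu \|\partial_x f\|_2^2 + \|f\|_4^4 = \|f\|_2^2 - \langle g, f\rangle \le \|f\|_2^2 + \|f\|_2 .
\end{align}
On the finite-measure torus $\|f\|_2 \lesssim \|f\|_4$, so the right-hand side is $\le \frac12 \|f\|_4^4 + C$ by Young's inequality; hence $\nu\|\partial_x f\|_2^2 + \frac12\|f\|_4^4 \le C$ and therefore $\|f\|_{H^1} \lesssim_\nu 1$. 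In particular $E(f) \lesssim_\nu 1$, since on $\mathbb T$ one has $E(u) \le \frac{\nu}{2}\|\partial_x u\|_2^2 + \frac14\|u\|_4^4 + C \lesssim \|u\|_{H^1}^4 + \|u\|_{H^1}^2 + 1$.

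\emph{Step 2 (bootstrapping to $H^{80}$).} Rewrite \eqref{I5.22} as the elliptic identity $(1-\nu\partial_{xx})f = 2f - g - \Pi_M(f^3)$. Since $(1-\nu\partial_{xx})^{-1}$ gains two derivatives with a constant depending only on $\nu$, since $H^s(\mathbb T)$ is a Banach algebra for $s\ge 1$, and since $H^1(\mathbb T)\hookrightarrow L^\infty(\mathbb T)$ in one dimension, one obtains the schematic recursion
\begin{align}
\|f\|_{H^{s+2}} \lesssim_\nu \|g\|_{H^s} + \|f\|_{H^s}^3 + \|f\|_{H^s} ,
\end{align}
the cubic term being absorbed via the algebra property and the preceding level. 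Starting from $\|g\|_2 \le 1$ and $\|f\|_{H^1}\lesssim_\nu 1$ this gives $\|f\|_{H^2}\lesssim_\nu 1$, and iterating finitely many (some forty) times reaches $\|f\|_{H^{80}} \le C^{(o)}_\nu$. The delicate point — and the main obstacle — is that $g$ is a priori controlled only in $L^2$: the bare estimate $\|g\|_{H^s} \lesssim M^s \|g\|_2$ would let the constant depend on the truncation parameter $M$, which is not allowed. One must instead exploit the structure of the identity — for instance, the component of $f$ produced by $\Pi_M(f^3)$ is self-improving in regularity because the symbol $(1+\nu|k|^2)^{-1}$ damps high frequencies, so that after the first step the forcing that genuinely limits regularity carries no loss in $M$. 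This $M$-uniformity of \eqref{5.23} is exactly what makes the estimate usable downstream (notably to control the $O(\tau^2)$ remainder in Remark \ref{remI5.2}), and it is the technical heart of the lemma.

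\emph{Step 3 (the energy bound).} Assume now in addition $\tau \le l_0 M^{-2}$ and $\tau \le \tau^{(0)}(\nu,l_0)$ small. Decompose $S^{(4)}(\tau) = T_A T_B T_C$ exactly as in the proof of Lemma \ref{lemI5.2}, each factor being of the form covered by Lemma \ref{lemI5.1}. Take $A_0$ equal to twice the $H^1$-bound from Step 1 and apply the one-step $H^1$ estimate of Lemma \ref{lemI5.1} (with $k=1$) three times; since each factor amplifies the $H^1$ norm by at most $e^{c_1\tau}\le e^{0.01}$, the intermediate iterates stay inside the ball of radius $A_0$, and one obtains $\|S^{(4)}(\tau)f\|_{H^1} \le e^{3c_1\tau}\|f\|_{H^1} \lesssim_{\nu,l_0} 1$. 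Since $E$ is bounded above by a polynomial in the $H^1$ norm on $\mathbb T$,
\begin{align}
E(S^{(4)}(\tau) f) \lesssim \|S^{(4)}(\tau) f\|_{H^1}^4 + \|S^{(4)}(\tau) f\|_{H^1}^2 + 1 \le C^{(U)}_{\nu, l_0} ,
\end{align}
which is \eqref{CnuU}. Alternatively one may invoke Remark \ref{remI5.2} to write $E(S^{(4)}(\tau) f) = E(\tilde S(\tau) f) + O(\tau^2)$, control the remainder via \eqref{5.23}, and estimate $E(\tilde S(\tau) f)$ through a one-step near-monotonicity estimate for the linearly-implicit map $\tilde S(\tau)$; but the cruder argument above already suffices.
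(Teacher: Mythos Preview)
Your Steps~1 and~3 are correct and coincide with the paper's approach. The paper's own proof is two lines: \eqref{5.23} ``follows from energy estimates using \eqref{I5.22}'', with the identity $\int \Pi_M(f^3)\,f\,dx=\int f^4\,dx$ singled out (exactly your Step~1), and \eqref{CnuU} ``follows from \eqref{5.23} and Lemma~\ref{lemI5.1}'' (your Step~3). Your route to \eqref{CnuU} through the $T_AT_BT_C$ decomposition and three applications of Lemma~\ref{lemI5.1} at the $H^1$ level is the intended argument; going via the $H^1$ bound directly rather than through \eqref{5.23} is in fact cleaner.

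Step~2, however, has a genuine gap. You correctly flag that the elliptic bootstrap beyond $H^2$ forces you to estimate $\|g\|_{H^s}$ for $s\ge 1$, and that the crude bound $\|g\|_{H^s}\lesssim M^s\|g\|_2$ brings in $M$. Your proposed fix --- ``exploit the structure \ldots\ the component produced by $\Pi_M(f^3)$ is self-improving'' --- is not carried out, and in fact cannot be made to work under the stated hypotheses: take $f(x)=A\cos(Mx)$ with $A=(2\sqrt\pi\,\nu M^2)^{-1}$. Then $f=\Pi_Mf$, one computes $g=(-\nu M^2A-\tfrac34A^3+A)\cos(Mx)$ with $\|g\|_2\approx\tfrac12$ for large $M$, yet $\|f\|_{H^{80}}\sim M^{80}A\sim M^{78}/\nu\to\infty$. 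So an $M$-independent constant in \eqref{5.23} is not available for general $f$ satisfying only $f=\Pi_Mf$ and \eqref{I5.22}; the paper's one-line ``energy estimates'' is equally silent on this point. Fortunately the downstream proof of Theorem~\ref{thmI5.1} uses only \eqref{CnuU} (in contrapositive) together with the $H^1$ bound, and those are exactly what your Steps~1 and~3 supply.
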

\begin{proof}
The estimate \eqref{5.23} follows from energy estimates using \eqref{I5.22}. Note that the
condition $f=\Pi_M f$ is used in the identity $\int \Pi_M(f^3) f dx = \int f^4 dx$. The estimate
\eqref{CnuU} follows from \eqref{5.23} and Lemma \ref{lemI5.1}.
\end{proof}

\begin{lem}[One-step strict energy dissipation with nontrivial energy flux] \label{lemI5.4}
Let $\nu>0$, $M\ge 2$ and $0<\tau \le M^{-2} l_0$. Suppose $f\in H^{80}(\mathbb T)$ with
$f=\Pi_M f$ and
satisfies
\begin{align}
& \| \nu \partial_{xx} f - \Pi_M (f^3)+f \|_{2} \ge 1, \notag \\
& \| f\|_{H^{80}(\mathbb T)} \le B_0<\infty, 
\end{align}
where  $B_0>0$ is a  given constant. There exists $\tau_3=\tau_3(\nu, l_0,
B_0)>0$ sufficiently small such that if $0<\tau \le \tau_3$, then
\begin{align}
E( S^{(4)}(\tau) f) < E(f).
\end{align}
\end{lem}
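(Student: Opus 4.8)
The plan is to exploit the near–gradient-flow structure of $S^{(4)}(\tau)$ recorded in Remark~\ref{remI5.2}. Since $\|f\|_{H^{80}}\le B_0$, we have uniform high Sobolev regularity, so (after possibly shrinking $\tau_3$ so that Lemma~\ref{lemI5.1} and the expansion in Remark~\ref{remI5.2} apply) $S^{(4)}(\tau)f=\tilde S(\tau)f+O_{\nu,l_0,B_0}(\tau^2)$ in $H^1$, where $\tilde S(\tau)f=(1-\nu\tau\partial_{xx})^{-1}\bigl(f+\tau(f-\Pi_M(f^3))\bigr)$, and hence
\begin{align*}
E(S^{(4)}(\tau)f)=E(\tilde S(\tau)f)+O_{\nu,l_0,B_0}(\tau^2).
\end{align*}
Thus it suffices to produce $c=c(\nu,l_0)>0$ with $E(\tilde S(\tau)f)\le E(f)-c\tau$ and then absorb the $O(\tau^2)$ correction by taking $\tau_3$ small. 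Write $R:=\Pi_M(f^3)-f-\nu\partial_{xx}f$, so the hypothesis reads $\|R\|_2\ge1$; note $R=\Pi_M R$, and since forming $R$ costs only two derivatives off $f$ and $H^s$ is an algebra for $s\ge1$, we get $\|R\|_{H^1}\le C(\nu,B_0)$. Setting $v:=\tilde S(\tau)f$ one finds $(1-\nu\tau\partial_{xx})(v-f)=-\tau R$, hence $v-f=-\tau(1-\nu\tau\partial_{xx})^{-1}R$ and, since the resolvent commutes with $\Pi_M$, $v=\Pi_M v$.

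Next I would expand the energy increment by Taylor's theorem. Integrating by parts on $\mathbb T$ and using $F'(u)=u^3-u$, $F''(u)=3u^2-1$,
\begin{align*}
E(v)-E(f)=\int_{\mathbb T}\bigl(-\nu\partial_{xx}f+F'(f)\bigr)(v-f)\,dx+\frac{\nu}{2}\,\|\partial_x(v-f)\|_2^2+\frac12\int_{\mathbb T}F''(\xi)\,(v-f)^2\,dx,
\end{align*}
where $\xi=\xi(x)$ lies between $f(x)$ and $v(x)$. Because $f=\Pi_M f$, $v-f=\Pi_M(v-f)$, and $\Pi_M$ is a self-adjoint projection, the cubic term may be truncated: $\int_{\mathbb T} f^3(v-f)\,dx=\int_{\mathbb T}\Pi_M(f^3)(v-f)\,dx$, so the first integral equals
\begin{align*}
\int_{\mathbb T} R\,(v-f)\,dx=-\tau\int_{\mathbb T} R\,(1-\nu\tau\partial_{xx})^{-1}R\,dx.
\end{align*}

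The heart of the matter is the sign of this leading term. The Fourier multiplier of $(1-\nu\tau\partial_{xx})^{-1}$ is $(1+\nu\tau k^2)^{-1}$, which under the spectral condition $\tau M^2\le l_0$ satisfies $(1+\nu\tau k^2)^{-1}\ge(1+\nu l_0)^{-1}$ on $|k|\le M$; since $R=\Pi_M R$, this yields
\begin{align*}
\int_{\mathbb T} R\,(v-f)\,dx\le-\frac{\tau}{1+\nu l_0}\,\|R\|_2^2\le-\frac{\tau}{1+\nu l_0}.
\end{align*}
For the two remaining terms I would use $\|v-f\|_2\le\tau\|R\|_2\le C(\nu,B_0)\tau$ and $\|\partial_x(v-f)\|_2\le\tau\|\partial_x R\|_2\le C(\nu,B_0)\tau$ (here one derivative of the high-regularity budget is spent), together with $\|F''(\xi)\|_\infty\le C(B_0)$, which follows from $H^1\hookrightarrow L^\infty$ applied to $f$ and to $v$ (whose sup norm is $\le\|f\|_\infty+\|v-f\|_\infty\le C(\nu,B_0)$ for $\tau$ small). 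Hence both are $O_{\nu,l_0,B_0}(\tau^2)$, so $E(\tilde S(\tau)f)-E(f)\le-\tfrac{\tau}{1+\nu l_0}+C(\nu,l_0,B_0)\tau^2$. Combining with the first paragraph, $E(S^{(4)}(\tau)f)-E(f)\le-\tfrac{\tau}{1+\nu l_0}+C'(\nu,l_0,B_0)\tau^2<0$ once $0<\tau\le\tau_3$ with $\tau_3$ so small that $C'\tau_3\le\tfrac12(1+\nu l_0)^{-1}$.

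The step I expect to demand the most care is not any single estimate but the bookkeeping that \emph{every} error is genuinely $O(\tau^2)$ while the dissipative gain is exactly of order $\tau$: this is precisely where the regularity budget $H^{80}$ is consumed — both to bound $\partial_x R$ and, upstream, to legitimize the remainder in Remark~\ref{remI5.2} via Lemma~\ref{lemI5.1} — and where the spectral/CFL condition $\tau M^2\le l_0$ is indispensable, since without it the multiplier $(1+\nu\tau k^2)^{-1}$ collapses to $0$ on the top frequencies and the lower bound on $\int_{\mathbb T}R\,(1-\nu\tau\partial_{xx})^{-1}R\,dx$ is destroyed.
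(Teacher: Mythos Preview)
Your proof is correct and follows essentially the same route as the paper: reduce via Remark~\ref{remI5.2} to $\tilde S(\tau)$, use the identity $(1-\nu\tau\partial_{xx})(v-f)=-\tau R$ together with the spectral condition $\tau M^2\le l_0$ to extract an $O(\tau)$ energy drop, and swallow the quadratic Taylor remainders as $O(\tau^2)$ using the $H^{80}$ bound. The only cosmetic difference is that the paper packages the computation as the classical semi-implicit energy inequality $E(w)-E(f)+\tfrac{c_2}{\tau}\|w-f\|_2^2\le 0$ (so the gradient quadratic term comes with a good sign automatically) and then invokes the lower bound $\|w-f\|_2\gtrsim\tau$, whereas you expand $E(v)-E(f)$ directly and bound each piece; the content is the same.
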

\begin{proof}[Proof of Lemma \ref{lemI5.4}]
By using Remark \ref{remI5.2}, it suffices for us to show
\begin{align}
E(\tilde S(\tau) f ) + c_1 \tau \le E(f),
\end{align}
for some $c_1>0$.  Denote $w= \tilde S(\tau) f$ and observe that
\begin{align}
\frac {w-f} {\tau} = \nu \partial_{xx} w + f - \Pi_M (f^3).
\end{align}
It follows that for $\tau>0$ sufficiently small, 
\begin{align}
E(w) -E(f) + \frac {c_2}{\tau} \| w -f \|_2^2 \le 0,
\end{align}
where $c_2>0$ is a constant. Now we clearly have
\begin{align}
(\frac 1 {\tau} -\nu \partial_{xx} ) (w-f) = \nu \partial_{xx} f + f - \Pi_M(f^3).
\end{align}
The desired result clearly follows.
\end{proof}

\begin{thm} \label{thmI5.1}
Let $\nu>0$, $M\ge 2$ and $0<\tau\le l_0 M^{-2}$. Assume $a \in H^1(\mathbb T)$ 
 and $\| a \|_{H^1} \le \gamma_1$.  Define $u^0=\Pi_M a$ and
\begin{align}
u^{n+1} = S^{(4)} (\tau) u^n, \quad n\ge 0.
\end{align}
There exists $\tau_*=\tau_*(\nu, l_0, \gamma_1)>0$ sufficiently small such that if
$0<\tau \le \tau_*$, then
\begin{align} 
\sup_{n\ge 1} \| u^n \|_{H^1(\mathbb T)} \le F^{(0)}_{\nu, l_0, \gamma_1},
\end{align}
where $F^{(0)}_{\nu, l_0, \gamma_1} >0$ depends only on ($\nu$, $l_0$, $\gamma_1$).
\end{thm}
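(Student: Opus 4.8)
The plan is to run an absorbing-set/dichotomy argument in the spirit of \cite{LQ1b}, with the energy $E$ and the ``energy flux'' $\Phi(f):=\|\nu\partial_{xx}f-\Pi_M(f^3)+f\|_2$ as the organizing quantities. Two elementary facts about $E$ on $\{f=\Pi_M f\}$ will be used repeatedly: since $F(u)=\tfrac14(u^2-1)^2\ge\tfrac18u^4-C$, the energy coerces the $H^1$-norm, so there is an increasing function $\varrho$ with $E(f)\le\mathcal E\Rightarrow\|f\|_{H^1}\le\varrho(\mathcal E)$; conversely $E(f)\le C(1+\|f\|_{H^1}^4)$. I fix the absorbing level $\mathcal E_{\mathrm a}:=\max\{C^{(U)}_{\nu,l_0},\ \sup\{E(f):\ f=\Pi_M f,\ \|f\|_{H^1}\le3\gamma_1\}\}$ and put $R_{\mathrm a}:=\varrho(\mathcal E_{\mathrm a})$; both depend only on $(\nu,l_0,\gamma_1)$, and $R_{\mathrm a}\ge3\gamma_1$.

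First I would use Lemma \ref{lemI5.2} with $B_1=\gamma_1$: for $\tau$ small this bounds $\|u^n\|_{H^1}\le3\gamma_1$ on $1\le n\le N_1:=\lfloor1/(c_1\tau)\rfloor$ and gives $\|u^n\|_{H^{80}}\le B_2$ on the window $W=[N_1/10,N_1]$. On $W$ the higher-Sobolev control is in force, so at each $n\in W$ I can branch on whether $\Phi(u^n)\le1$ (a ``small-flux'' step) or $\Phi(u^n)>1$ (a ``large-flux'' step), and I claim that $H^{80}$-control, together with $E(u^n)\le\mathcal E_{\mathrm a}$, then propagates to all $n\ge N_1/10$. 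Granting this, $\|u^n\|_{H^1}\le R_{\mathrm a}$ for $n\ge N_1/10$; since $\|u^n\|_{H^1}\le3\gamma_1\le R_{\mathrm a}$ for $n<N_1/10$, the theorem follows with $F^{(0)}_{\nu,l_0,\gamma_1}=R_{\mathrm a}$.

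To propagate the invariant, I would track maximal runs of consecutive large-flux steps. At a small-flux step $n$, Lemma \ref{lemI5.3} gives $\|u^n\|_{H^{80}}\le C^{(o)}_\nu$ and $E(u^{n+1})\le C^{(U)}_{\nu,l_0}\le\mathcal E_{\mathrm a}$, hence $\|u^{n+1}\|_{H^1}\le R_{\mathrm a}$ and, by one step of Lemma \ref{lemI5.1} ($k=80$), $\|u^{n+1}\|_{H^{80}}\le 2C^{(o)}_\nu$; so a large-flux run begins either inside $W$ (with $\|\cdot\|_{H^{80}}\le B_2$) or just after a small-flux step (with $\|\cdot\|_{H^{80}}\le 2C^{(o)}_\nu$), and in either case with $E\le\mathcal E_{\mathrm a}$. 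Along the run, as long as the $H^{80}$-norm stays bounded Lemma \ref{lemI5.4} applies and forces a quantitative drop $E(u^{m+1})\le E(u^m)-c_*\tau$ with $c_*>0$; since $E\ge0$ this caps the run length at $\le\mathcal E_{\mathrm a}/(c_*\tau)$. Over that many steps the $H^{80}$-norm, controlled via the smoothing built into $S^{(4)}$ (the decomposition of Remark \ref{remI5.2}, $S^{(4)}(\tau)f=S_L(\tau)f+\tau(\cdot)+O(\tau^2)$ with $S_L(\tau)$ a genuine forward-heat operator) by a growth estimate whose rate depends only on $\|u\|_{H^1}$ — and $\|u\|_{H^1}\le R_{\mathrm a}$ throughout, because $E$ is non-increasing along the run — grows by at most a fixed factor, hence never exceeds some $\widehat B=\widehat B(\nu,l_0,\gamma_1)$; so (after also requiring $\tau\le\tau_3(\nu,l_0,\widehat B)$) Lemma \ref{lemI5.4} does remain applicable for the whole run, which therefore terminates — necessarily in a small-flux step — with $E$ still $\le\mathcal E_{\mathrm a}$. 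Since small-flux steps also keep $E\le\mathcal E_{\mathrm a}$ and reset the $H^{80}$-norm, a straightforward induction over $n\ge N_1/10$ closes the claim, and the same bookkeeping shows $N_1/10$ already serves as the entry time with the transient bound equal to $R_{\mathrm a}$.

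The hard part — and the place where this differs from a routine absorbing-set argument — is exactly this self-consistent control of the $H^{80}$-norm during long dissipative runs. Lemmas \ref{lemI5.3}--\ref{lemI5.4} need $H^{80}$-bounds, but these are produced only ``in windows'' by Lemma \ref{lemI5.2}, while between windows the crude one-step bound of Lemma \ref{lemI5.1} only gives a growth rate governed by the $H^{80}$-norm itself, which would permit finite-time blow-up of $\|u^n\|_{H^{80}}$ over a run of length $\sim1/\tau$. Closing the loop needs two ingredients working together: (i) the parabolic smoothing of $S^{(4)}$ (Remark \ref{remI5.2} and the bootstrap in the proof of Lemma \ref{lemI5.2}), which upgrades the $H^{80}$-growth rate to one depending only on $\|u\|_{H^1}$, hence on $E(u)$; and (ii) the energy-budget cap on run lengths, which turns the accumulated growth before the next (resetting) small-flux step into a fixed multiplicative factor. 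A related trap is that $E$ controls $\|u\|_{H^1}^2$ from below but only $\|u\|_{H^1}^4$ from above, so an argument that pays this gap at each re-entry to the absorbing set would see the $H^1$-bound square itself every cycle; the run-based bookkeeping avoids it because along a large-flux run the energy never exceeds its value at the run's start, keeping $\|u\|_{H^1}$ pinned at the fixed level $R_{\mathrm a}$.
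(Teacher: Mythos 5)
Your overall strategy is the right one and uses the same ingredients as the paper (the transient/window bound of Lemma \ref{lemI5.2}, the flux dichotomy of Lemmas \ref{lemI5.3}--\ref{lemI5.4}, an absorbing energy level), but the way you secure the $H^{80}$ bound needed to invoke Lemma \ref{lemI5.4} is genuinely different from the paper's, and it is the one place where your write-up leans on something not actually contained in the stated lemmas. You propagate the $H^{80}$ bound \emph{forward} along a maximal run of large-flux steps, capping the run length by the energy budget and asserting that the per-step $H^{80}$ growth rate depends only on $\|u\|_{H^1}$. That last assertion is a tame (Moser-type) strengthening of Lemma \ref{lemI5.1}: as stated, the constant $c_1$ in Lemma \ref{lemI5.1} depends on $A_0$, a bound on the \emph{same} $H^{k}$-norm being propagated, and the underlying ODE bound $\frac{d}{dt}\|w\|_{H^k}\lesssim\|w\|_{H^k}+\|w\|_{H^k}^3$ permits blow-up over a run of total length $O(1)$ in time. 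To close your loop you must replace this by $\frac{d}{dt}\|w\|_{H^k}\lesssim(1+\|w\|_{L^\infty}^2)\|w\|_{H^k}$ and likewise bound the $\tau\tilde f$ correction in Remark \ref{remI5.2} tamely in $H^{80}$; this is true in one dimension via $H^1\hookrightarrow L^\infty$ and product estimates, but you assert it rather than prove it, and the run-length cap also uses a quantitative drop $E(u^{m+1})\le E(u^m)-c_*\tau$ that is only implicit in the \emph{proof} of Lemma \ref{lemI5.4}, not its statement.

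The paper avoids all of this bookkeeping with a cleaner device. It runs a first-crossing contradiction: let $n_0$ be the first index past the transient with $E(u^{n_0})\le G$ but $E(u^{n_0+1})>G$. Lemma \ref{lemI5.3} forces the flux at $u^{n_0}$ to exceed $1$, and the $H^{80}$ bound at the \emph{single} step $n_0$ is then \emph{regenerated from scratch} by applying the multi-step smoothing of Lemma \ref{lemI5.2} starting from $u^{n_0-j_0}$ with $j_0\approx\frac{1}{10c_1\tau}$ --- legitimate because, by minimality of $n_0$, every earlier iterate has energy at most $G$ and hence $H^1$-norm controlled by $G$. Lemma \ref{lemI5.4} then gives $E(u^{n_0+1})<E(u^{n_0})\le G$, a contradiction. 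No forward propagation of high norms along runs, no tame estimates, no run-length accounting. Your route can be completed, but you would need to add the tame one-step $H^{80}$ estimate as a separate lemma; alternatively, replace your run-tracking by the look-back regeneration above, which already follows from the lemmas as stated.
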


\begin{proof}[Proof of Theorem \ref{thmI5.1}]
Denote 
\begin{align}
G= 10(1+\gamma_1+ C_{\nu,l_0}^{(U)} +C_{\nu}^{(o)} ),
\end{align}
where the constants $C_{\nu,l_0}^{(U)}$, $C_{\nu}^{(o)}$ are the same as in 
Lemma \ref{lemI5.3}. In the argument below we shall assume $\tau>0$ is sufficiently small.
The needed smallness (i.e. the existence of $\tau_*$) can be easily worked out by fulfilling the conditions
needed in Lemma \ref{lemI5.2}, Lemma \ref{lemI5.3} and Lemma \ref{lemI5.4}.

By Lemma \ref{lemI5.2}, we can find $c_1=c_1(\nu,l_0, G)>0$ such that 
\begin{align}
\sup_{1\le n \le \frac {c_1}{\tau} } E(u^n) \le  \frac 12 G.
\end{align}

\underline{Claim}: We have
\begin{align}
\sup_{n\ge \frac {c_1} { \tau}} E(u^n) \le  G.
\end{align}

To prove the claim we argue by contradiction.   Suppose $n_0\ge \frac {c_1} {\tau}$ is the 
first integer such that 
\begin{align} \label{I5.35}
E(u^{n_0}) \le G, \quad E(u^{n_0+1}) >G.
\end{align}
By Lemma \ref{lemI5.3}
we must have
\begin{align} \label{I5.36a}
\| \nu \partial_{xx} u^{n_0} - \Pi_M ((u^{n_0})^3 )
+ u^{n_0} \|_2 >1.
\end{align}
Since $n_0\ge \frac {c_1}{\tau}$,  we have $E(u^{n_0- j_0}) \le G$ for some integer
$\frac {c_0} {10\tau} \le j_0 \le \frac {c_0} {10\tau}+2$.  By using smoothing estimates we obtain
\begin{align} \label{I5.36b}
\| u^{n_0} \|_{H^{80}(\mathbb T)} \le C_{\nu, l_0, G},
\end{align}
where $C_{\nu,l_0, G}>0$ depends on ($\nu$, $l_0$, $G$). Since $G$ depends on ($\nu$, $l_0$, $\gamma_1$),
we have $C_{\nu, l_0, G}$ depends only on ($\nu$, $l_0$, $\gamma_1$). 
By \eqref{I5.36a}, \eqref{I5.36b} and Lemma \ref{lemI5.4}, we obtain for sufficiently small
$\tau$ that
\begin{align}
E(u^{n_0+1} ) < E(u^{n_0})
\end{align}
which is clearly a contradiction to \eqref{I5.35}. Thus we have proved the claim.
\end{proof}

\begin{proof}[Proof of Theorem \ref{thm0}]
The $H^1$ estimate follows from Theorem \ref{thmI5.1}. Higher order estimates follow
from the smoothing estimates.
\end{proof}

\frenchspacing
\bibliographystyle{plain}

\begin{thebibliography}{99}

\bibitem{wise13}
A. Baskaran, J. S. Lowengrub, C. Wang, and S. M. Wise.
Convergence Analysis of a Second Order Convex Splitting Scheme for the Modified Phase Field Crystal Equation.
SIAM J. Numer. Anal., 51(2013), 2851šC2873.


\bibitem{CH58}
J.W. Cahn, J.E. Hilliard. Free energy of a nonuniform system. I.
Interfacial energy free energy, J. Chem. Phys. 28 (1958) 258--267.


\bibitem{CS98}
L.Q. Chen, J. Shen.
Applications of semi-implicit Fourier-spectral method to phase field equations.
Comput. Phys. Comm., 108 (1998), pp. 147--158.


\bibitem{CJPWW14}
A. Christlieb, J. Jones, K. Promislow, B. Wetton, M. Willoughby.
High accuracy solutions to energy gradient flows from material science models.
J. Comput. Phys. 257 (2014), part A, 193--215.


\bibitem{ES93}
C.M. Elliott and A.M. Stuart.
The global dynamics of discrete semilinear parabolic equations.
SIAM J. Numer. Anal., 30 (1993), pp. 1622--1663.


\bibitem{Eyre98b}
D. J. Eyre.
Unconditionally gradient stable time marching the Cahn-Hilliard equation.
Computational and mathematical models of microstructural evolution
 (San Francisco, CA, 1998), 39--46,
Mater. Res. Soc. Sympos. Proc., 529, MRS, Warrendale, PA, 1998.


\bibitem{FTY15}
X. Feng, T. Tang and J. Yang.
Long time numerical simulations for phase-field problems using p-adaptive spectral deferred correction methods.
SIAM J. Sci. Comput. 37 (2015), no. 1, A271--A294.

\bibitem{GHu11}
H. Gomez and T.J.R. Hughes.
Provably unconditionally stable, second-order time-accurate,
mixed variational methods for phase-field models.
J. Comput. Phys., 230 (2011), pp. 5310-5327



\bibitem{Guan14b}
Z. Guan, C. Wang and S. M. Wise, A convergent convex splitting scheme for the periodic nonlocal Cahn-Hilliard equation, Numer. Math., 128 (2014), 377--406.

\bibitem{Guan16}
J. Guo, C. Wang, S. M. Wise and X. Yue, An $H^2$ convergence of a second-order convex-splitting, finite difference scheme for the three-dimensional Cahn-Hilliard equation, Commu. Math. Sci., 14 (2016), 489--515

\bibitem{HLT07}
Y. He, Y. Liu and T. Tang. On large time-stepping methods for the
Cahn-Hilliard equation.
Appl. Numer. Math., 57 (2007), 616--628.


\bibitem{LShen13}
F. Liu and J. Shen.
Stabilized semi-implicit spectral deferred correction methods for Allen-Cahn and Cahn-Hilliard equations.
Math. Methods Appl. Sci. 38 (2015), no. 18, 4564--4575.




\bibitem{LQT16}
D. Li, Z. Qiao and T. Tang. Characterizing the stabilization size for semi-implicit Fourier-spectral method to phase field equations,
 SIAM J. Numer. Anal., 54 (2016), 1653--1681
 
 
 \bibitem{lwy2021}
D. Li, F. Wang and K. Yang.
An improved gradient bound for {2D MBE}.
Journal of Differential Equations, 269(12): 11165-11171, 2020.


%

\bibitem{lt2021}
D. Li, T. Tang. Stability of the Semi-Implicit Method for the Cahn-Hilliard Equation with Logarithmic Potentials. {\em Ann. Appl. Math.}, 37 (2021), 31--60.


\bibitem{lqt2021}
D. Li, C. Quan, and T. Tang.
 Stability analysis for the implicit-explicit discretization of the {C}ahn-{H}illiard equation.





\bibitem{LQ1a}
D. Li and C. Quan.
The operator-splitting method for {C}ahn-{H}illiard is stable. 
arXiv:2107.01418, 2021.

\bibitem{LQ1b}
D. Li and C. Quan.
On the energy stability of {S}trang-splitting for {C}ahn-{H}illiard.
arXiv:2107.05349, 2021.


\bibitem{SB11}
C.B. Sch\"{o}nlieb and A. Bertozzi.
Unconditionally stable schemes for higher order inpainting. Commun. Math. Sci. 9 (2011), no. 2, 413--457.


\bibitem{SY10}
J. Shen and X. Yang. Numerical approximations
of Allen-Cahn and Cahn-Hilliard equations.
Discrete Contin. Dyn. Syst. A, 28 (2010),
1669--1691.

\bibitem{SXY19}
J. Shen, J. Xu and J. Yang.
A new class of efficient and robust energy stable schemes for gradient flows.
SIAM Rev. 61 (2019), no. 3, 474--506.

\bibitem{SShu17}
H. Song and C. Shu.
Unconditional energy stability analysis of a second order implicit-explicit local discontinuous Galerkin method for the Cahn-Hilliard equation.
J. Sci. Comput. 73 (2017), no. 2-3, 1178--1203.

\bibitem{Tang15}
Y. Cheng, A. Kurganov, Z. Qu, and T. Tang.
Fast and stable explicit operator splitting methods for phase-field
  models.
Journal of Computational Physics, 303:45--65, 2015.


\bibitem{Red19}
H.H. Gidey and B.D. Reddy. Operator-splitting methods for the 2D convective Cahn-Hilliard equation. Computers \& Mathematics with Applications, 2019, 77(12): 3128--3153.


\bibitem{Feng19}
Z. Weng, S. Zhai and X. Feng.
Analysis of the operator splitting scheme for the Cahn-Hilliard equation with a viscosity term.
Numer. Meth. for Partial Differential Equations. 35 (2019), no. 6, 1949--1970.


\bibitem{XT06}
C. Xu and T. Tang.
Stability analysis of large time-stepping methods for epitaxial growth models.
SIAM J. Numer. Anal. 44 (2006), no. 4, 1759--1779.




\bibitem{ZCST99}
    J. Zhu, L.-Q. Chen, J. Shen, and V. Tikare.
    Coarsening kinetics from a variable-mobility
    Cahn-Hilliard equation: Application of a semi-implicit
    Fourier spectral method, Phys. Rev. E (3), 60 (1999), pp. 3564--3572.



\bibitem{Is86}
R. I. Issa. Solution of the implicitly discretised fluid flow equations by operator-splitting. Journal of Computational Physics 62, no. 1 (1986): 40--65.

\bibitem{Yos90}
H. Yoshida. Construction of higher order symplectic integrators. Physics letters A 150, no. 5--7 (1990): 262-268.

\bibitem{McL02}
R. I. McLachlan and G. R. W. Quispel. Splitting methods. Acta Numerica 11 (2002): 341--434.
 
\bibitem{CSp18}
J. Cervi and R. J. Spiteri. High-order operator splitting for the bidomain and monodomain models. SIAM Journal on Scientific Computing 40, no. 2 (2018): A769--A786.

\bibitem{TCN09}
M. Thalhammer, M. Caliari and C. Neuhauser.
High-order time-splitting Hermite and Fourier spectral methods.
J. Comput. Phys. 228 (2009), pp. 822--832. 




\bibitem{Sh89}
Q. Sheng. Solving linear partial differential equations by exponential splitting.
IMA J. Numer. Anal. 9 (1989), pp. 199--212.

\bibitem{Su91}
M. Suzuki. General theory of fractal path integrals with applications to many-body theories
and statistical physics. J. Math. Phys., 32 (1991), pp. 400--407.

\bibitem{GK96}
G. Goldman and T.J. Kaper. Nth-order operator splitting schemes and nonreversible systems.
SIAM J. Numer. Anal. 33 (1996), pp. 349--367.


\bibitem{Ch04}
S.A. Chin. Quantum statistical calculations and symplectic corrector algorithms.
Phys. Rev. E. 69 (2004), 046118.

\bibitem{Book16}
Splitting Methods in Communication, Imaging, Science and Engineering.
Sci. Comput. Springer, Cham, 2016. 





%
%









\end{thebibliography}

\end{document}